\newtheorem{theorem}{Theorem}
\newtheorem{problem}{Problem}
\newtheorem{definition}{Definition}
\newenvironment{proof}{\medskip{\em Proof.}}{\par}
\newcommand{\inn}{\mathrm{ int}\,}
\newfont{\gothic}{eufm9 scaled 1200}
\begin{document}

\begin{frontmatter}
\title{Some analogies between Haar meager sets\\ and Haar null sets
       in abelian Polish groups}
\author{Eliza  Jab{\l}o{\'n}ska}
\ead{elizapie@prz.edu.pl}

\address{Department of Mathematics, Rzesz{\'o}w University of
Technology, Powsta\'{n}c\'{o}w~Warszawy~12, 35-959~Rzesz{\'o}w, POLAND}


\begin{abstract}
In the paper we would like to pay attention to some analogies between Haar meager sets and Haar null sets. Among others, we will show that $0\in \inn (A-A)$ for each Borel set $A$, which is not Haar meager in an abelian Polish group. Moreover, we will give an example of a Borel non-Haar meager set $A\subset c_0$ such that $\inn (A+A)=\emptyset$. Finally, we will define  $D$-measurability as a topological analog of Christensen measurability,  and apply our generalization of Piccard's theorem to prove that each  $D$-measurable homomorphism is continuous.  Our results refer to the papers \cite{Ch}, \cite{Darji} and \cite{FS}.\end{abstract}
\begin{keyword}
Haar meager set, Haar null set, Piccard's theorem, measurable homomorphism \MSC 28C10, 28E05, 54B30, 54E52.
\end{keyword}
\end{frontmatter}

\section{Introduction}
In 1972 J.P.R.~Christensen defined \textit{Haar null} sets in an abelian Polish group (a topological abelian group with a complete separable
metric) in such a way that in a locally compact group it is equivalent to the notation of Haar measure zero sets.
These definition has been extended further to nonabelian
groups by J.~Mycielski \cite{Myc}.
Unaware of  Christensen's result, B.R.~Hunt, T.~Sauer and J.A.~Yorke \cite{HSY}-\cite{HSY1} found this notation again, but in a topological abelian group with a complete metric (not necessary separable).

In 2013 U.B.~Darji introduced another family of "small" sets in an abelian Polish group, which is equivalent to the notation of meager sets in a locally compact group. In an abelian Polish group $X$ he called a set $A\subset X$ \textit{Haar meager} if there is a Borel set $B\subset X$ with $A\subset B$, a compact metric space $K$ and a continuous function $f:K\to X$ such that $f^{-1}(B+x)$ is meager in $K$ for all $x\in X$.
He also proved that the family $\mathcal{HM}$ of all Haar meager sets is a $\sigma$--ideal.

Christensen \cite[Theorem 2]{Ch} proved that $0\in\inn (A-A)$ for each universally measurable set $A$, which is not Haar null in an abelian Polish group $X$. In fact this theorem generalizes Steinhaus' theorem. Following Christensen's idea, P. Fischer and Z. S{\l}odkowski \cite{FS} introduced the notation of Christensen measurability and, consequently,  they proved that each Christensen measurable homomorphism is continuous.

The main aim of the paper is to prove analogous results for Haar meager sets.

\section{A generalization of Piccard's theorem}

First,
we show a generalization of Piccard's theorem (see e.g. \cite[Theorem~2.9.1]{Piccard}).
The following fact will be useful in the sequel:

\begin{theorem}{\em \cite[p. 90]{DS}}
 If $X$ is an abelian Polish group, there exists an equivalent complete metric $\rho$ on $X$, which is invariant; i.e. $\rho(x,y)=\rho(x+z,y+z)$ for every $x,y,z\in X$.
\end{theorem}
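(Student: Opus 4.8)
The plan is to deduce this classical fact from the Birkhoff--Kakutani metrization theorem together with a completion argument, exploiting that $X$, being Polish, is completely metrizable and hence $G_\delta$ in any metric superspace. First I would apply Birkhoff--Kakutani: since $X$ is a metrizable (indeed second countable, Hausdorff) topological group, it carries a compatible left-invariant metric $d$, and because $X$ is abelian left-invariance is the same as the two-sided invariance $d(x,y)=d(x+z,y+z)$ demanded in the statement. So it only remains to upgrade $d$ to a \emph{complete} invariant metric, and in fact I would argue that $d$ is already complete.

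To see this, I would pass to the metric completion $(\widehat X,\widehat d)$ of $(X,d)$. Invariance makes the group operations uniformly continuous: one has $\widehat d\big((x_1+x_2),(y_1+y_2)\big)\le d(x_1,y_1)+d(x_2,y_2)$, so addition is $1$-Lipschitz on $X\times X$ and $x\mapsto -x$ is an isometry; both therefore extend continuously to $\widehat X$, and the abelian group axioms carry over by density and continuity. Hence $\widehat X$ is an abelian topological group admitting the compatible complete invariant metric $\widehat d$, and $X$ sits inside it as a dense subgroup; since $X$ is separable and dense, $\widehat X$ is a Polish group as well.

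The crux is then to show $X=\widehat X$. As $X$ is Polish it is completely metrizable, so by Alexandrov's theorem $X$ is a $G_\delta$ subset of the completely metrizable space $\widehat X$; thus $X$ is a \emph{dense} $G_\delta$ \emph{subgroup} of the Polish (hence Baire) group $\widehat X$. Now for any $g\in\widehat X$ the translate $g+X$ is again a dense $G_\delta$ subset of $\widehat X$, so by the Baire category theorem $X\cap(g+X)\neq\emptyset$; writing a point of this intersection as $g+x$ with $x\in X$ gives $g=(g+x)-x\in X$. Hence $\widehat X=X$, so the invariant metric $d$ supplied by Birkhoff--Kakutani is already complete, and we may take $\rho:=d$.

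I expect the only genuinely substantive step to be the verification that the completion of an invariantly metrized abelian group is again a topological group --- the continuous extension of the operations and the persistence of the axioms; the remaining ingredients (Birkhoff--Kakutani, Alexandrov's $G_\delta$ characterization of completely metrizable subspaces, and the fact that two dense $G_\delta$ sets meet in a Baire space) are classical. The one place where commutativity is essential is precisely there: it lets us replace the left-invariant Birkhoff--Kakutani metric by a two-sided invariant one, which is what makes the completion a topological group --- for a general Polish group the left completion need not be one, and indeed not every Polish group admits a compatible complete left-invariant metric.
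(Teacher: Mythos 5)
The paper does not actually prove this statement; it is quoted as a known theorem with a bare citation to Dunford--Schwartz, so there is no in-paper argument to compare yours against. Your proof is correct and is the standard route to this classical fact. Each step checks out: Birkhoff--Kakutani applies because $X$ is metrizable (hence first countable and Hausdorff), and commutativity upgrades left-invariance to the two-sided invariance $d(x,y)=d(x+z,y+z)$; invariance gives $d(x_1+x_2,y_1+y_2)\le d(x_1,y_1)+d(x_2,y_2)$ and $d(-x,-y)=d(y,x)$, so addition and negation extend to the completion $\widehat X$ and the group identities persist by density, making $\widehat X$ an abelian Polish group containing $X$ as a dense subgroup; by Alexandrov's theorem the completely metrizable subspace $X$ is $G_\delta$ in $\widehat X$, every translate $g+X$ is again a dense $G_\delta$, two dense $G_\delta$ sets meet in the Baire space $\widehat X$, and a point of $X\cap(g+X)$ forces $g\in X$ because $X$ is a subgroup. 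Hence $\widehat X=X$ and $d$ itself is complete, which is slightly stronger than the bare statement (one need not pass to a different metric $\rho$). Your concluding caveat is also accurate: the argument genuinely needs two-sided invariance to make the completion a group, and it is exactly this that fails for general (non-abelian) Polish groups.
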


\begin{theorem}\label{A-A}
Let $X$ be an abelian Polish group. If $A\subset X$ is a Borel non-Haar meager set, then $0\in\inn (A-A).$
\end{theorem}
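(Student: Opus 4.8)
The plan is to establish the contrapositive: \emph{if $0\notin\inn(A-A)$, then the Borel set $A$ is Haar meager}. By the theorem quoted above I may assume that the metric of $X$ is an equivalent complete \emph{invariant} metric $\rho$ — invariance being what will let one handle series in $X$. Since $A$ is non-Haar meager it is nonempty, so $0\in A-A$; and since $0$ is not interior to $A-A$, one can choose $x_n\in X\setminus(A-A)$ with $x_n\to0$ and, passing to a subsequence, with $\sum_{n=1}^{\infty}\rho(0,x_n)<\infty$.

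Next I would take $K=\{0,1\}^{\mathbb{N}}$ with the product topology, a compact metric space, and define $f\colon K\to X$ by $f(\varepsilon)=\sum_{n=1}^{\infty}\varepsilon_n x_n$. Invariance of $\rho$ together with $\sum_n\rho(0,x_n)<\infty$ make the partial sums Cauchy, so $f$ is well defined, and $\rho\bigl(f(\varepsilon),f(\varepsilon')\bigr)\le\sum_{n>N}\rho(0,x_n)$ whenever $\varepsilon,\varepsilon'$ agree on the first $N$ coordinates, so $f$ is continuous. The feature that makes this map work is: if $\varepsilon'$ is obtained from $\varepsilon$ by flipping a single coordinate $n$, then $f(\varepsilon')-f(\varepsilon)=\pm x_n$.

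Now fix $x\in X$ and put $S=f^{-1}(A+x)$, a Borel subset of $K$ (as $A$ is Borel and $f$ continuous), hence a set with the Baire property. Let $\sigma_n\colon K\to K$ be the homeomorphism flipping the $n$-th coordinate. If $\varepsilon\in S$ and $\sigma_n(\varepsilon)\in S$, then $f(\varepsilon),f(\sigma_n(\varepsilon))\in A+x$ would force $\pm x_n=f(\sigma_n(\varepsilon))-f(\varepsilon)\in A-A$, contradicting $x_n\notin A-A$ (note $A-A$ is symmetric); hence $S\cap\sigma_n(S)=\emptyset$ for every $n$. If $S$ were non-meager, the Baire property would make it comeager in some nonempty basic clopen set $U$ determined by finitely many coordinates, all of index $\le m$; choosing $n>m$ gives $\sigma_n(U)=U$, so $\sigma_n(S)$ is comeager in $U$ too, whence $S\cap\sigma_n(S)$ is comeager in $U$ and in particular nonempty — a contradiction. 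Thus $S$ is meager in $K$.

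Since $x$ was arbitrary, $f^{-1}(A+x)$ is meager in $K$ for every $x\in X$, so $B=A$, $K$ and $f$ witness that $A$ is Haar meager, contradicting the hypothesis; hence $0\in\inn(A-A)$. I expect the main delicate point to be the verification that $f$ is well defined and continuous in the merely-Polish (not normed) group $X$, which is exactly where completeness and invariance of $\rho$ enter; the meagerness of each $S$ is then a soft Baire-category argument. It is worth noting that this runs parallel to Christensen's measure-theoretic proof for Haar null sets, with the Cantor scheme playing the role of a test probability measure and ``meager'' replacing ``null''.
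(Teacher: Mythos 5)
Your proof is correct, and it reaches the conclusion by a genuinely different route than the paper, even though both arguments are built around the same Cantor-cube map $\varepsilon\mapsto\sum_{n}\varepsilon_nx_n$ and the same use of an invariant complete metric. The paper chooses the $x_i$ outside the set $F(A)=\{x\in X:(x+A)\cap A\notin\mathcal{HM}\}$, uses the $\sigma$-ideal property of $\mathcal{HM}$ to carve out a non-Haar meager set $A_0=A\setminus\bigcup_i\bigl[(x_i+A)\cap A\bigr]$ satisfying $(x_i+A_0)\cap A_0=\emptyset$, and then applies Piccard's theorem inside the Cantor group to a non-meager Baire-property preimage $g^{-1}(A_0+y_K)$ to produce some $x_n\in A_0-A_0$, a contradiction. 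You instead choose the $x_n$ outside $A-A$ itself, which makes the relevant disjointness immediate (so no $F(A)$, no $A_0$, no appeal to the $\sigma$-ideal property), and you replace Piccard's theorem by a self-contained lemma: a Baire-property subset $S$ of $\{0,1\}^{\mathbb{N}}$ disjoint from all its coordinate flips $\sigma_n(S)$ must be meager. That flip lemma is precisely the fragment of Piccard's theorem the paper uses (the statement $e_n\in S-S$ for large $n$ is the same as $S\cap\sigma_n(S)\neq\emptyset$), so the two proofs share the same combinatorial core; yours is more elementary and mirrors Christensen's measure-theoretic argument more closely, while the paper's, read carefully, yields the formally stronger conclusion that $F(A)$ is a neighbourhood of $0$. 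Two trivial repairs: your continuity estimate should carry a factor $2$ (one tail term for each of $\varepsilon,\varepsilon'$), and the identity $f(\sigma_n(\varepsilon))-f(\varepsilon)=\pm x_n$ deserves the one-line remark that the partial sums of the two series differ by exactly $\pm x_n$ from index $n$ onward, so the limits do as well.
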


\begin{proof}
For the proof by contradiction suppose that $0\not\in\inn (A-A).$ Let
$$
F(A):=\{x\in X: (x+A)\cap A\not\in\mathcal{HM}\}.
$$
Clearly $0\in F(A)$ and $F(A)\subset A-A$. Hence $0\not\in\inn F(A)$ and there is a sequence $(x_i)_{i\in\mathbb{N}}\subset X\setminus F(A)$ such that
$$
\rho(0,x_i)\leq 2^{-i}\;\;\mbox{for each}\;\;i\in\mathbb{N}.
$$

Now, let
$$
A_0=A\setminus\left[\bigcup_{i\in\mathbb{N}}(x_i+A)\cap A\right].
$$
Since $x_i\not\in F(A)$ for each $i\in\mathbb{N}$, so $(x_i+A)\cap A\in\mathcal{HM}$ for $i\in\mathbb{N}$. Hence, by \cite[Theorem 2.9]{Darji}, $\bigcup_{i\in\mathbb{N}}(x_i+A)\cap A\in\mathcal{HM}$ and, consequently, $A_0\not\in\mathcal{HM}$, because $A\not\in\mathcal{HM}$.

Since $A_0\not\in\mathcal{HM}$, for every compact metric space $K$ and continuous function $g:K\to X$ there is a $y_K\in X$ such that $g^{-1}(A_0+y_K)$ is comeager in $K$.

Let $K:=\{0,1\}^{\aleph_0}$ be the countable Cantor cube. It is a compact metric space with the product metric
$$
d(k,l):=\sum_{i=1}^{\infty} 2^{-i}\overline{d}(k_i,l_i)\;\;\mbox{for every}\;\;k=(k_i)_{i\in\mathbb{N}},l=(l_i)_{i\in\mathbb{N}}\in K, $$
where $\overline{d}$ is the discrete metric in $\{0,1\}$. Define a function $g:K\to X$ as follows:
\begin{equation}\label{g}
g(k)=\sum_{i=1}^{\infty} k_ix_i\;\;\mbox{for}\;\; k=(k_i)_{i\in\mathbb{N}}\in K.
\end{equation}

To show that $g$ is well defined, we have to prove that the series $\sum_{i=1}^{\infty} k_ix_i$ is convergent. Indeed, for each $m,n\in\mathbb{N}$ with $m>n$ we have:
\begin{equation}\label{gx}
\begin{array}{lcl}
\displaystyle\rho\left(\sum_{i=1}^m k_ix_i,\sum_{i=1}^n k_ix_i\right)&=&\displaystyle \rho\left(0,\sum_{i=n+1}^m k_ix_i\right)\\[1ex]
&\leq& \displaystyle\rho \left(0,k_{n+1}x_{n+1})+\rho(k_{n+1}x_{n+1},\sum_{i=n+1}^m k_ix_i\right)\\[1ex]
&=&\displaystyle \rho\left(0,k_{n+1}x_{n+1}\right)+\rho\left(0,\sum_{i=n+2}^m k_ix_i\right)\\[1ex]
&\leq&\displaystyle \ldots\leq\sum_{i=n+1}^m \rho\left(0,k_ix_i\right)\leq \sum_{i=n+1}^m 2^{-i}.
\end{array}
\end{equation}
Consequently, since $K$ is complete, the series $\sum_{i=1}^{\infty} k_ix_i$ is convergent.

Next, observe that $g$ is uniformly continuous. In fact, if we fix a positive integer $n$, then, by~\eqref{gx}, we get
$$
\rho\left(\sum_{i=1}^m k_ix_i,\sum_{i=1}^n k_ix_i\right)\leq\sum_{i=n+1}^m2^{-i}<2^{-n}\;\;\mbox{ for }\,m>n
$$
for each $k=(k_n)_{n\in\mathbb{N}}\in K$. Consequently
\begin{equation}\label{gy}
\rho\left(\sum_{i=1}^{\infty} k_ix_i,\sum_{i=1}^n k_ix_i\right)\leq 2^{-n}.
\end{equation}
Fix $\varepsilon>0$. Then we can find a positive integer $N$ such that $2^{-N+1}<\varepsilon$. Further, let $0<\delta<2^{-N}$ and $k=(k_i)_{i\in\mathbb{N}},l=(l_i)_{i\in\mathbb{N}}\in K$ be such that $d(k,l)<\delta$. Since $\delta<2^{-N}$, so $k_i=l_i$ for all $i\in\{1,\ldots,N\}$. Thus
$$
\rho\left(\sum_{i=1}^N k_ix_i,\sum_{i=1}^N l_ix_i\right)=0.
$$
Hence, on account of~\eqref{gy}, we obtain
$$
\begin{array}{lll}
\lefteqn{\rho(g(k),g(l))=\rho\left(\sum_{i=1}^{\infty}k_ix_i,\sum_{i=1}^{\infty} l_ix_i \right)}\null\\[2ex]
& & \null\displaystyle \leq \rho\left(\sum_{i=1}^{\infty}k_ix_i,\sum_{i=1}^N k_ix_i \right)+ \rho\left(\sum_{i=1}^Nk_ix_i,\sum_{i=1}^N l_ix_i \right)+ \rho\left(\sum_{i=1}^Nl_ix_i,\sum_{i=1}^{\infty} l_ix_i \right)\\[3ex]
& & \null\displaystyle \leq2^{-N}+0+2^{-N}<\frac{\varepsilon}{2}+ \frac{\varepsilon}{2}=\varepsilon.
\end{array}
$$
In this way we have proved that $g$ is uniformly continuous on~$K$.

Now, for the countable Cantor cube $K$ and function $g$ given by \eqref{g} the set $g^{-1}(A_0+y_K)$ is comeager in $K$ for some $y_K\in X$. Moreover, since $A$ is a Borel set, so do $A_0$ and $g^{-1}(A_0+y_K)$. Thus the set $g^{-1}(A_0+y_K)$ is comeager with the Baire property in $K$ and we can apply the well known Piccard's theorem. Hence there is an open ball $K(0,2^{-k})$ with some $k\in\mathbb{N}$ such that
$$
K(0,2^{-k})\subset g^{-1}(A_0+y_K)-g^{-1}(A_0+y_K).
$$
Let $n=k+1$ and $e_{n}=(0,\ldots,0,1,0,0,\ldots)$, where $1$ is in the $n$-th place. Since
$d(0,e_{n})=2^{-n}<2^{-k}$, so $e_{n}\in K(0,2^{-k})$. Hence $e_{n}=a-b$ for some $a,b\in g^{-1}(A_0+y_K)$. Then
$$
g(a)-g(b)=g(e_{n}+b)-g(b)=g(e_{n})=x_{n}
$$
and whence $x_{n}\in (A_0-A_0).$ Consequently $(x_{n}+A_0)\cap A_0\neq\emptyset$, what contradicts the definition of $A_0$ and ends the proof.
\end{proof}

Let $X$ be an abelian Polish group, $\mathcal{B}(X)$ be the Borel $\sigma$--algebra on $X$ and denote by $\mathcal{F}(X)$ the family of all sets $A\subset X$ such that
$$
\forall_{\;K\subset X\mbox{\small{-compact}}}\;\; \exists _{\;x_K\in X}\;\;K+x_K\subset A.
$$
Each set $A\in \mathcal{F}(X)\cap \mathcal{B}(X)$ is neither Haar null (in view of Ulam's theorem), nor Haar meager.

Following by E.~Matou\u{s}kov\'{a} and L.~Zaj\'{\i}\u{c}ek \cite{Mat},
we can give an example of a non-Haar null universally measurable set $A\subset X$ such that $\inn (A+A)=\emptyset$; e.g. $$A=\{(x_n)_{n\in\mathbb{N}}\in c_0: \forall_{n\in\mathbb{N}}\;x_n\geq0\}\in\mathcal{F}(c_0).$$
It means that the "strong version" of Piccard's theorem as the "strong version" of Steinhaus' theorem do not hold in non-locally compact abelian Polish groups.

\section{$D$-measurability}

Let $X$ be an abelian Polish group. Let us introduce the following

\begin{definition}
{\em A set $A\subset X$ is  $D$--\textit{measurable} if $A=B\cup M$ for a Haar meager set $M\subset X$ and a Borel set
$B\subset X$.}
\end{definition}

This definition is analogous to the definition of Christensen measurability from \cite{FS}. That is why proofs of theorems in this chapter run in an analogous way as some proofs in \cite{FS}.

\begin{theorem}
The family $\mathcal{D}$ of all $D$--measurable sets is a $\sigma$--algebra.
\end{theorem}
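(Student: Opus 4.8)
The plan is to verify the three defining properties of a $\sigma$-algebra for $\mathcal{D}$: that it contains $X$ (or $\emptyset$), that it is closed under countable unions, and that it is closed under complementation. The first is immediate since $\emptyset$ is Borel, hence $D$-measurable. Closure under countable unions is also easy: if $A_n = B_n \cup M_n$ with $B_n$ Borel and $M_n \in \mathcal{HM}$, then $\bigcup_n A_n = \left(\bigcup_n B_n\right) \cup \left(\bigcup_n M_n\right)$, where $\bigcup_n B_n$ is Borel and $\bigcup_n M_n \in \mathcal{HM}$ because $\mathcal{HM}$ is a $\sigma$-ideal (Darji's result, cited in the excerpt). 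So the real content is closure under complementation.

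For complementation, suppose $A = B \cup M$ with $B$ Borel and $M \in \mathcal{HM}$. By definition of Haar meagerness there is a Borel set $M'$ with $M \subset M'$ and $M' \in \mathcal{HM}$ (indeed, the definition of Haar meager already supplies such a Borel envelope). Now write
\begin{equation*}
X \setminus A = X \setminus (B \cup M) = (X \setminus B) \cap (X \setminus M) \supset (X \setminus B) \cap (X \setminus M') = (X \setminus (B \cup M')).
\end{equation*}
Set $B_1 := X \setminus (B \cup M')$, which is Borel, and $M_1 := (X \setminus A) \setminus B_1$. Then $X \setminus A = B_1 \cup M_1$, and it remains only to check $M_1 \in \mathcal{HM}$. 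But $M_1 \subset (X \setminus A) \setminus B_1 \subset X \setminus B_1 = B \cup M' $ minus $B_1$... more directly, $M_1 \subset (B \cup M') \setminus (X\setminus A)$; since any point of $X \setminus A$ lies outside $B$, we get $M_1 \subset M' \in \mathcal{HM}$, and $\mathcal{HM}$ is a $\sigma$-ideal hence closed under taking subsets (of the Borel envelope in the appropriate sense), so $M_1$ is Haar meager. Thus $X \setminus A \in \mathcal{D}$.

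The only genuine obstacle is making sure that the notion of Haar meager set is closed downward — i.e. that a subset of a Haar meager set is Haar meager — so that $M_1 \subset M'$ with $M' \in \mathcal{HM}$ really gives $M_1 \in \mathcal{HM}$. This is built into Darji's definition: $A$ is Haar meager precisely when it is contained in a Borel set $B$ admitting a compact $K$ and continuous $f\colon K\to X$ with all translates $f^{-1}(B+x)$ meager, so any subset of $A$ is covered by the same $B$ and witnesses. Hence downward closure is automatic and the argument above goes through. I would present the proof in this order — containment of $\emptyset$, countable unions, then complementation — flagging that the $\sigma$-ideal property of $\mathcal{HM}$ (and in particular its downward closure and closure under countable unions) is exactly what is used, and is the reason the definition of $D$-measurability was set up with a Borel envelope plus a Haar meager error term.
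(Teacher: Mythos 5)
Your proposal is correct and follows essentially the same route as the paper: closure under countable unions via the $\sigma$-ideal property of $\mathcal{HM}$, and closure under complementation by replacing $M$ with a Borel Haar meager envelope $M'$, taking $B_1=X\setminus(B\cup M')$ as the Borel part, and observing that the leftover $(X\setminus A)\setminus B_1$ is contained in $M'$ (the paper writes this same leftover as $N_0\setminus M_0$). The two ingredients you flag -- the Borel envelope supplied by Darji's definition and the downward closure of $\mathcal{HM}$ -- are exactly the ones the paper uses.
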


\begin{proof}
Let $W_n\in \mathcal{D}$ for each $n\in\mathbb{N}$, i.e. $W_n=B_n\cup M_n$ for Borel sets $B_n$ and Haar meager sets $M_n$. Then, by \cite[Theorem 2.9]{Darji}, $\bigcup_{n\in\mathbb{N}}M_n$ is Haar meager and, consequently $\bigcup_{n\in\mathbb{N}}W_n=\left(\bigcup_{n\in\mathbb{N}}B_n\right)\cup \left(\bigcup_{n\in\mathbb{N}}M_n\right)\in\mathcal{D}$.

Let $W\in\mathcal{D}$, i.e.  $W=B\cup M$ for a Borel set $B$ and a Haar meager set $M$. Thus there is a Haar meager Borel set $N\supset M$. In view of \cite[Theorem 2.9]{Darji}, $M_0:=M\setminus B\subset M$ is Haar meager. Since $N_0:=N\setminus B\subset N$, $N_0$ is a Borel Haar meager set. Moreover, $$
M_0\subset N_0,\;\;M_0\cap B=\emptyset,\;\;\;N_0\cap B=\emptyset,\;\;W=B\cup M_0.
$$
Consequently,
$$
X\setminus W=X\setminus (B\cup M_0)=[X\setminus (B\cup N_0)]\cup (N_0\setminus M_0).
$$
But $N_0\setminus M_0$ is Haar meager and $X\setminus (B\cup N_0)$ is a Borel set, so $X\setminus W\in\mathcal{D}$, what ends the proof.
\end{proof}

Having $\sigma$--algebra, we can define a measurable function in the classical way:

\begin{definition}
 {\em Let $X$ be an abelian Polish group and $Y$ be a topological group. A mapping $f:X\to Y$ is a $D$--\textit{measurable function} if $f^{-1}(U)\in\mathcal{D}$ in $X$ for each open set $U\subset Y$.}
\end{definition}

Now we can prove the following
\begin{theorem}
Let $X$, $Y$ be abelian Polish groups. If $f:X\to Y$ is a $D$--measurable homomorphism, then $f$ is continuous.
\end{theorem}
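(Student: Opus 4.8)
The plan is to mirror the classical argument (as in Fischer--Słodkowski \cite{FS}) that a measurable homomorphism is continuous, replacing the use of Steinhaus'/Piccard's theorem for Haar null sets by Theorem~\ref{A-A}. First I would observe that, since $Y$ is a Polish group, it is second countable; fix a countable base $(U_n)_{n\in\mathbb N}$ at the identity $0_Y$. By $D$-measurability of $f$, each $f^{-1}(U_n)$ lies in $\mathcal D$, hence $f^{-1}(U_n)=B_n\cup M_n$ with $B_n$ Borel and $M_n$ Haar meager. Since $X=\bigcup_n f^{-1}(U_n)$ and the countable union of Haar meager sets is Haar meager (\cite[Theorem 2.9]{Darji}), while $X$ itself is not Haar meager (it is not meager in itself, or directly: $X\in\mathcal F(X)$), there must be some $n$ with $B_n\notin\mathcal{HM}$; that is, $f^{-1}(U)$ contains a Borel non-Haar meager set for a prescribed small neighbourhood $U$ of $0_Y$.

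Next I would run the standard symmetrization/Cauchy-functional-equation step. Given an arbitrary neighbourhood $V$ of $0_Y$, choose a symmetric neighbourhood $W$ with $W+W\subset V$, and pick (by the previous paragraph) a Borel set $A\notin\mathcal{HM}$ with $A\subset f^{-1}(W)$. By Theorem~\ref{A-A}, $0_X\in\inn(A-A)$, so there is a neighbourhood $O$ of $0_X$ with $O\subset A-A$. For any $x\in O$ write $x=a-b$ with $a,b\in A$; then, using that $f$ is a homomorphism, $f(x)=f(a)-f(b)\in W-W=W+W\subset V$. Hence $f(O)\subset V$, which gives continuity of $f$ at $0_X$, and then continuity everywhere by translation (again using that $f$ is a homomorphism and that the topologies of $X$ and $Y$ are translation-invariant).

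The step that needs a little care — and which I expect to be the main (though modest) obstacle — is the very first one: extracting a Borel non-Haar meager \emph{subset} of $f^{-1}(U)$ rather than just knowing $f^{-1}(U)\notin\mathcal{HM}$. The decomposition $f^{-1}(U_n)=B_n\cup M_n$ handles this, but one must make sure the indexing is done uniformly: for a single target neighbourhood $V$ one needs a single Borel non-Haar-meager $A\subset f^{-1}(W)$, and this follows because $f^{-1}(W)$ itself is in $\mathcal D$ (as $W$ is open), so $f^{-1}(W)=B\cup M$ and $B$ must fail to be Haar meager since otherwise $f^{-1}(W)\in\mathcal{HM}$, and then, summing over a countable cover of $X$ by such preimages, $X\in\mathcal{HM}$ — impossible. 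Everything else is the routine group-homomorphism bookkeeping, and the key non-trivial input is precisely Theorem~\ref{A-A}, exactly as Piccard's theorem is the key input in the locally compact case and Christensen's theorem in the Haar null case.
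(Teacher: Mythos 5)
Your overall strategy is exactly the paper's: reduce to continuity at $0_X$, show that $f^{-1}(W)$ is not Haar meager for a suitable neighbourhood $W$ of $0_Y$, use the $D$-measurable decomposition $f^{-1}(W)=B\cup M$ to extract a Borel non-Haar-meager set, and feed it into Theorem~\ref{A-A}; the symmetrization $W+W\subset V$ and the final translation step are the same routine bookkeeping the paper performs. The one place where your write-up actually breaks is the covering step that you yourself flag as the main obstacle. The identity $X=\bigcup_n f^{-1}(U_n)$ is false when $(U_n)$ is a countable base \emph{at the identity} of $Y$: then $\bigcup_n U_n$ is merely a neighbourhood of $0_Y$, and there is no reason for $f(X)$ to lie inside it. Likewise, in your third paragraph the phrase ``a countable cover of $X$ by such preimages'' is never instantiated: $f^{-1}(W)$ by itself does not cover $X$, and the preimages of basic neighbourhoods of $0_Y$ do not cover it either.

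The correct cover --- and this is precisely what the paper uses --- is by \emph{translates} of a single preimage: since $f(X)$ is a separable metric space, pick $(x_i)_{i\in\mathbb{N}}$ in $X$ with $\{f(x_i):i\in\mathbb{N}\}$ dense in $f(X)$; then $f(X)\subset\bigcup_i \bigl(f(x_i)+W\bigr)$, and because $f$ is a homomorphism, $f^{-1}\bigl(f(x_i)+W\bigr)=x_i+f^{-1}(W)$, so $X=\bigcup_i \bigl(x_i+f^{-1}(W)\bigr)$. Since $\mathcal{HM}$ is a translation-invariant $\sigma$-ideal (\cite[Theorem 2.9]{Darji}) and $X\notin\mathcal{HM}$, some translate $x_{i_0}+f^{-1}(W)$ fails to be Haar meager, hence so does $f^{-1}(W)$ itself. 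With this replacement your argument closes; everything else in your proposal matches the paper's proof.
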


\begin{proof}
Let $Y_0=\overline{f(X)}$. Clearly $Y_0$ is a subgroup of $Y$, so it is enough to prove that $f:X\to Y_0$ is continuous.

Let $U$ be a neighborhood of $0$ in $Y_0$. Then there is a neighborhood $V$ of $0$ such that $V-V\subset U$. Moreover,
$\overline{f(X)}\subset f(X)+V=Y_0$, so
$$
Y_0=\bigcup_{x\in X} (f(x)+V).
$$
Since $X$ is separable, we can choose a sequence $(x_i)_{i\in\mathbb{N}}$ such that
$$
Y_0=\bigcup_{i\in \mathbb{N}} (f(x_i)+V).
$$
Thus
$$
X=f^{-1}(Y_0)=\bigcup_{i\in \mathbb{N}} f^{-1}(f(x_i)+V)=\bigcup_{i\in \mathbb{N}} (x_i+f^{-1}(V)).
$$

The set $X$ is not Haar meager, so, according to \cite[Theorem~2.9]{Darji}, the set $x_{i_0}+f^{-1}(V)$ is not Haar meager for some $i_0\in\mathbb{N}$, so does $f^{-1}(V)\not\in\mathcal{HM}$. Hence, since $f$ is $D$--measurable, $f^{-1}(V)=B\cup M$, where $B$ is a Borel non-Haar meager set and $M$ is Haar meager. Thus, in view of Theorem~\ref{A-A}, there is a neighborhood $W\subset X$ of $0$ such that
$$
W\subset A-A\subset f^{-1}(V)-f^{-1}(V)\subset f^{-1}(V-V)\subset f^{-1}(U).
$$

In this way we proved that $f$ is continuous at $0$ and, consequently we obtain the
thesis.
\end{proof}

Here we presented some similarities between Haar meager sets and Haar null sets, as well as between $D$--measurability and Christensen measurability. We can say that the notation of Haar meager sets and $D$--measurability is a topological analog of the notation of Haar null sets and Christensen measurability in an abelian Polish group; similarly as measure and category in a locally compact topological group. But we know that there are also differences between measure and category in locally compact group (see \cite{Oxtoby}). That is why the following problem seems to be interesting:

\begin{problem}
{\em To give examples of differences between Haar meager sets and Haar null sets, as well as $D$--measurability and Christensen measurability.}
\end{problem}

\end{document}